\newtheorem{theorem}{Theorem}[section]
\newtheorem{lemma}[theorem]{Lemma}
\theoremstyle{definition}
\newtheorem*{remark}{Remark}
\newcommand{\Z}{\mathbb{Z}}
\newcommand{\R}{\mathbb{R}}
\newcommand{\mx}{\mathbf{x}}
\newcommand{\mX}{\mathbf{X}}
\newcommand{\mbU}{\mathbf{U}}
\newcommand{\E}{\operatorname{{\mathbb E}}}
\renewcommand{\P}{\operatorname{\mathbb{P}}}
\newcommand{\F}{\mathcal{F}}
\newcommand{\M}{\mathcal{M}}
\newcommand{\mcL}{\mathcal{L}}
\begin{document}

\title{Random Additions in Urns of Integers}

\author{Mackenzie Simper\footnote{\texttt{msimper@stanford.edu}}}
\affil{Department of Mathematics, Stanford University}
\date{}
\maketitle

\begin{abstract}
Consider an urn containing balls labeled with integer values. Define a discrete-time random process by drawing two balls, one at a time and with replacement, and  noting the labels. Add a new ball labeled with the sum of the two drawn labels. This model was introduced by Siegmund and Yakir ({\it Ann. Probab. 33(5), 2005}) for labels taking values in a finite group, in which case the distribution defined by the urn converges to the uniform distribution on the group. For the urn of integers, the main result of this paper is an exponential limit law. The mean of the exponential is a random variable with distribution depending on the starting configuration. This is a novel urn model which combines multi-drawing and an infinite type of balls. The proof of convergence uses the contraction method for recursive distributional equations. 
\end{abstract}

\section{Introduction}

In \cite{SiegmundYakir}, the following urn model was introduced: Balls in an urn are labeled with elements of a finite group $G$. To update the urn, two balls are drawn (independently and with replacement) and their labels recorded. A new ball with the product of the two labels is then added to the urn. When the initial configuration of the urn contains generators of the group, it is proven that the composition of the urn converges to uniform distribution on the group (an alternate proof is given in \cite{AbramsLandau}). A natural extension of the model is to allow the labels with values from infinite group. Specifically, this paper considers the same dynamics for an urn containing integers with the addition operation, and call such a process a \emph{$\Z$-urn}. 

The behavior of the urn should depend on the initial configuration of balls. Perhaps a natural first question to explore is the behavior of the urn initially started with an additive basis for $\Z$, e.g.\ $\{-1, 1 \}$. Figure \ref{fig: hist} shows the results from two different simulations of this model. An interesting phenomenon is observed with the starting configuration of $\{-1, 1 \}$: the values in the urn either become almost all positive or almost all negative. Furthermore, the curve of the histogram appears exponential, reflected on either side of the vertical axis. That is, if $\mu_n$ is the empirical measure defined by the labels of the $n$ balls in the urn, then $\mu_n$ converges to some scaled exponential. Another observation from simulations is that the mean of the distribution $\mu_n$ varies from with different trials, i.e.\ there is not a deterministic limit.

The appropriately rescaled mean of the empirical measure defined by the urn process converges almost surely to some random variable. The distribution of the limiting random variable is dependent on the initial configuration of the urn. The distribution of a random draw from the urn then converges to an exponential. This is our main result Theorem \ref{thm: introTheorem}, described in the following section.

\begin{center}
\begin{figure}[t]
\begin{center}
  \begin{subfigure}[b]{0.45\textwidth}
    \includegraphics[scale = 0.45]{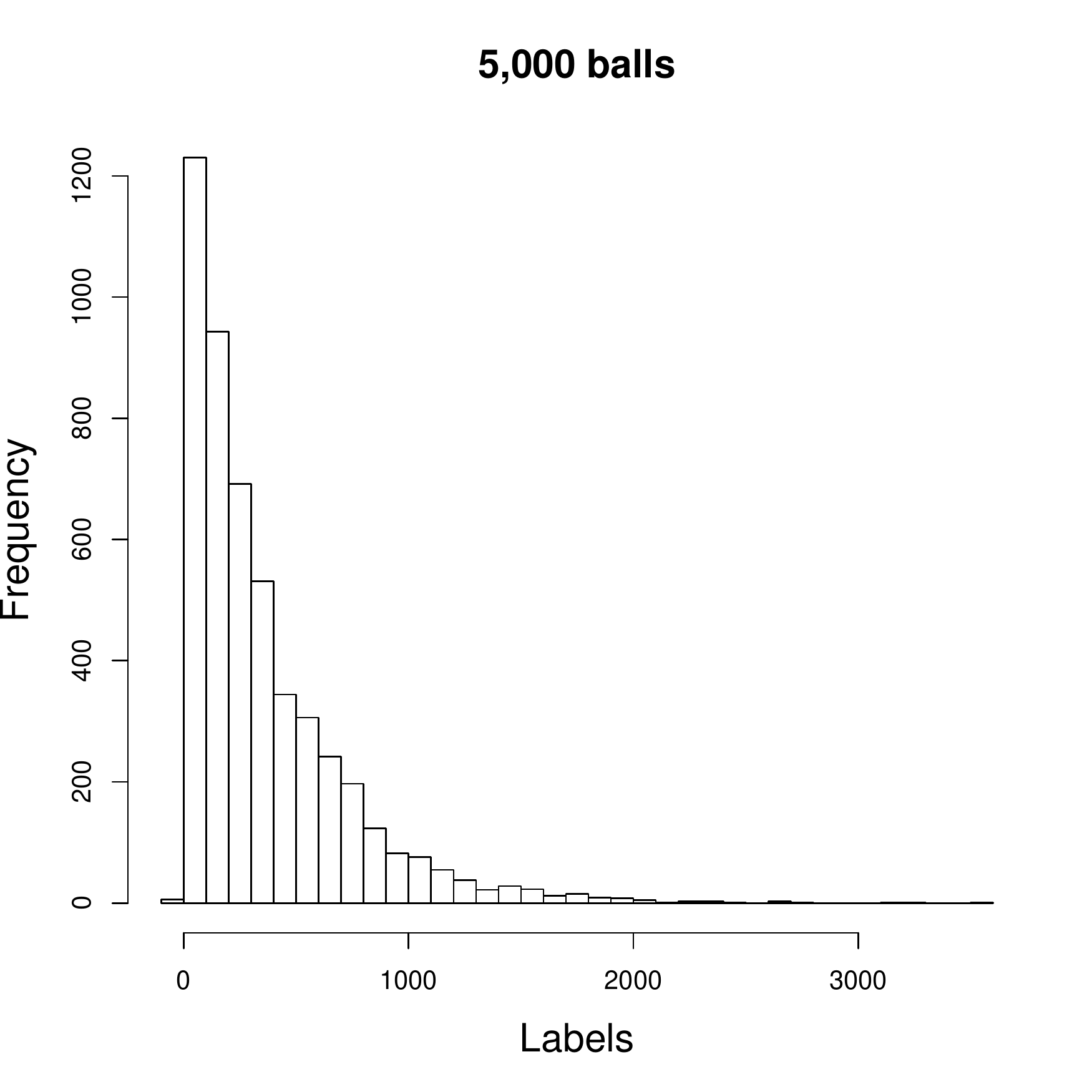}
  
  \end{subfigure}
  \begin{subfigure}[b]{0.45\textwidth}
    \includegraphics[scale = 0.45]{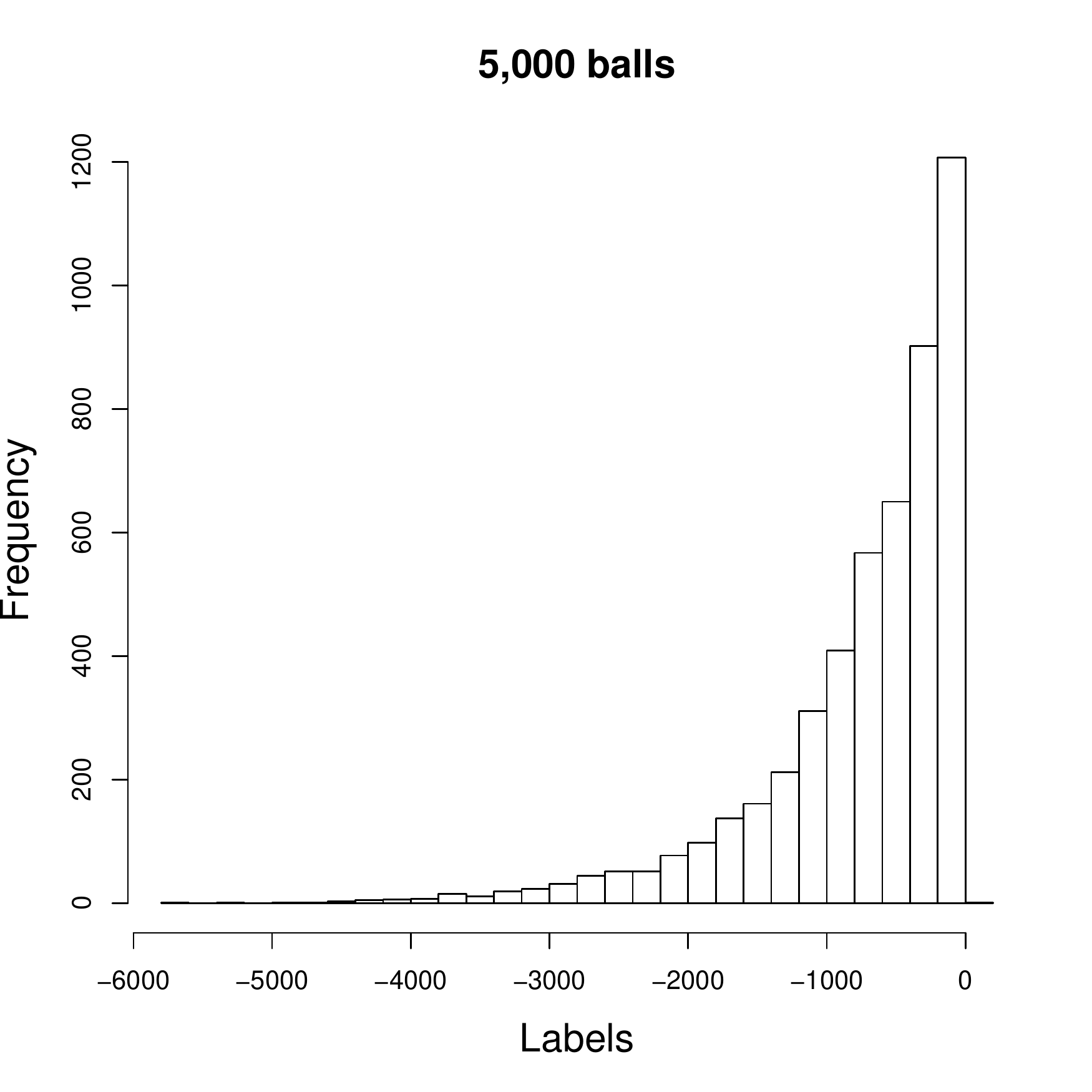}

  \end{subfigure}
  \end{center}
  
  \caption{Histograms of two different trials of the urn model with initial configuration $\{-1, 1 \}$, after 5,000 balls have been added. Each histogram has 30 bins. The bin width of the left histogram is $100$, with mean $356$, and the width of the right histogram is $200$, with mean $-737$. }
  \label{fig: hist}
\end{figure}

\end{center}

\subsection{Model and results}

Let $\tau_0$ be the number of balls initially started in the urn. For convenience, we begin the time index at $\tau_0 + 1$, so that at time $n > \tau_0$ there are exactly $n$ balls in the urn (and the number of additions is $n - \tau_0$). Let $X_n \in \Z^d$ denote the label of the $n$th ball in the urn. When $d > 1$, write $X_n(i)$ for the $i$th coordinate. Thus, the urn at time $n$ is given by the set
\[
\mbU_n = \mbU_0 \cup \{X_{\tau_0 + 1}, \dots, X_n \}.
\]
The letter $Z_n$ is used to denote a random draw from the urn, that is $Z_n \sim \mu_n$, where $\mu_n$ is the empirical distribution
\[
\mu_n = \frac{1}{n} \left( \sum_{x \in \mbU_0} \delta_x + \sum_{i = \tau_0 + 1}^n \delta_{X_i} \right), \,\,\,\,\, n \ge \tau_0.
\]
Then $X_{n+1}$ is generated by
\[
X_{n+1} = Z_n^1 + Z_n^2,
\]
where $Z_n^1 \stackrel{d}{=} Z_n^2 \stackrel{d}{=} Z_n$ are two independent draws from the urn at step $n$. For convenience, assign an arbitrary indexing to the initial balls $x \in \mathbf{U}_0$, writing them $X_1, X_2, \dots, X_{\tau_0}$.  Let $S_n = \sum_{i = 1}^n X_i$ be the sum of the labels of balls in the urn at time $n$.  When studying $Z_n$ and $X_n$ we focus on the quenched values. That is, there is an implicit underlying urn processes, and expectations are calculated conditional on the appropriate step of the urn. For instance, the mean of $Z_n$ is
\[
\E[Z_n \mid \mu_n] = \frac{S_n}{n}.
\]

\begin{theorem} \label{thm: introTheorem}
Let $\{\mu_n \}_{n \ge 0}$ be a sequence of empirical measures defined by the $\Z$-urn with any initial configuration. Suppose $Z_n \sim \mu_n$ is a random draw from the urn. Let $A = \lim_{n \to \infty} \frac{\E[Z_n \mid \mu_n]}{n+1}$. Then $A$ exists almost surely, $\P(A \neq 0) > 0$, and
\[
\lim_{n \to \infty} \mathcal{L} \left( \frac{Z_n}{n} \mid \mu_n \right) = \mathcal{L}(Z),
\]
 where $Z \sim A \cdot \text{Exp}(1)$. 
\end{theorem}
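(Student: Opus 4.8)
The plan is to pull out the random scale $A$ from a martingale, and then identify the limiting law of $Z_n/n$ as the unique solution of a recursive distributional equation, using the contraction method.

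First, $A$ exists. Conditioning on $\mathcal{F}_n=\sigma(X_1,\dots,X_n)$ and using $\E[Z_n\mid\mu_n]=S_n/n$ with $X_{n+1}=Z_n^1+Z_n^2$ gives $\E[S_{n+1}\mid\mathcal F_n]=\tfrac{n+2}{n}S_n$, so $M_n:=S_n/(n(n+1))=\E[Z_n\mid\mu_n]/(n+1)$ is a martingale. Running the same computation with $|X_i|$ in place of $X_i$ replaces the identity by $\E[\sum_{i\le n+1}|X_i|\mid\mathcal F_n]\le\tfrac{n+2}{n}\sum_{i\le n}|X_i|$, whence $\E\sum_{i\le n}|X_i|=O(n^2)$ and $\sup_n\E|M_n|<\infty$; therefore $A:=\lim_n M_n$ exists almost surely. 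For non-degeneracy I pass to second moments: with $Q_n=\sum_{i\le n}X_i^2$ one has $\E[S_{n+1}^2\mid\mathcal F_n]=(1+\tfrac4n+\tfrac2{n^2})S_n^2+\tfrac2nQ_n$ and $\E[Q_{n+1}\mid\mathcal F_n]=(1+\tfrac2n)Q_n+\tfrac2{n^2}S_n^2$. A short joint induction bounds these by $Cn^4$ and $Cn^3$, so $M_n$ is $L^2$-bounded, hence $M_n\to A$ in $L^2$ and $\E[A^2]=\lim\E[M_n^2]$; conversely $\E[S_{n+1}^2\mid\mathcal F_n]\ge(1+\tfrac4n)S_n^2$ together with $\E[S_{n_0}^2]>0$ for some $n_0$ (which holds as soon as the urn has a nonzero ball — the all-zero case being excluded) forces $\E[S_n^2]\ge cn^4$, hence $\E[A^2]>0$ and $\P(A\ne0)>0$.

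For the distributional limit, note that a uniform draw $Z_n$ from $\mathbf U_n$ has a creation time $\sigma$ uniform on $\{1,\dots,n\}$, and on $\{\sigma>\tau_0\}$ one has $Z_n=Y_1+Y_2$ where, conditionally on $\mathcal F_{\sigma-1}$, $Y_1,Y_2$ are i.i.d.\ from $\mu_{\sigma-1}$. Writing $W_n=Z_n/n$ and $\rho_n=\mathcal L(W_n\mid\mu_n)$, and using that $\sigma/n\to U\sim\mathrm{Unif}(0,1)$ while the finitely many initial balls and the discrepancy between the $n$- and $(\sigma-1)$-scalings contribute negligibly, any limit $\rho_\infty$ of $\rho_n$ must satisfy the recursive distributional equation $W\stackrel{d}{=}U\,(W^{(1)}+W^{(2)})$ — i.i.d.\ copies, $U$ uniform, independent — together with $\E[W]=A$. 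The map $\nu\mapsto\mathcal L(U(W^{(1)}+W^{(2)}))$ preserves the mean and, since $\E[U^2]=\tfrac13$ and the cross term vanishes once the means agree, contracts the Wasserstein-$2$ distance by $\sqrt{2/3}$ on the complete space of laws of fixed mean with finite second moment; so it has a unique fixed point there. Because $\mathrm{Unif}(0,1)$ times $\mathrm{Gamma}(2,1)$ equals $\mathrm{Exp}(1)$, the law of $A\cdot\mathrm{Exp}(1)$ is a fixed point, hence \emph{the} fixed point, so $\rho_\infty=\mathcal L(A\cdot\mathrm{Exp}(1)\mid A)$. To upgrade ``every subsequential limit equals this'' to genuine convergence $d_2(\rho_n,\mathcal L(A\cdot\mathrm{Exp}(1)\mid A))\to0$, I would run the contraction quantitatively: the one-ball recursion $\rho_{n+1}=\tfrac{n}{n+1}(\text{rescaling})_*\rho_n+\tfrac1{n+1}\,\delta_{X_{n+1}/(n+1)}$, together with the contraction estimate and $M_n\to A$, yields a recursive inequality for $\E[d_2(\rho_n,\mathcal L(M_n\cdot\mathrm{Exp}(1)))^2]$ that drives it to $0$; almost sure convergence then follows along a geometric subsequence via a maximal inequality.

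The conceptual skeleton — the recursive distributional equation and its unique exponential solution — is short; the genuine difficulty is the convergence step, and there the main obstacle is that the two summed balls are only \emph{conditionally} i.i.d.\ given the urn, never independent in the annealed sense. One is therefore forced to carry the random (quenched) measures $\rho_n$ through the whole contraction argument and to control how the contraction interacts with the slowly varying weights $k/n$ and with the drift of $M_n$ toward $A$; the bulk of the work is in making these error estimates precise. (A more computational alternative avoids metrics entirely: the power sums $P_n^{(p)}=\sum_{x\in\mathbf U_n}x^p$ satisfy $\E[P_{n+1}^{(p)}\mid\mathcal F_n]=(1+\tfrac2n)P_n^{(p)}+\tfrac1{n^2}\sum_{j=1}^{p-1}\binom pj P_n^{(j)}P_n^{(p-j)}$, and an induction on $p$ — Doob-decompose $P_n^{(p)}/a_n$ with $a_{n+1}/a_n=1+\tfrac2n$; by the inductive hypothesis the compensator is $\sim p!\,A^p n^{p+1}/a_n$ and the martingale part is $o(n^{p-1})$ — gives $P_n^{(p)}/n^{p+1}\to p!\,A^p$ a.s., the $p$-th moment of $A\cdot\mathrm{Exp}(1)$; as that law satisfies Carleman's condition, convergence of all moments gives the weak limit. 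Here the obstacle moves to the a priori bounds $\E[P_n^{(p)}]=O(n^{p+1})$: crude Cauchy–Schwarz relates level $p$ to level $2p$ and loses the exponent, so these need a careful simultaneous estimate.)
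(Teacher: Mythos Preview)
Your proposal follows the same two-step architecture as the paper: (i) show $A_n=S_n/(n(n+1))$ is an $L^2$-bounded martingale with $\E[A^2]>0$, and (ii) identify the limit law via a recursive distributional equation and the contraction method in the Wasserstein-$2$ metric. Your treatment of step (i) matches the paper's Lemma almost line for line, including the coupled recursion for $(\E S_n^2,\E Q_n)$.

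The one substantive difference is in step (ii). The paper does \emph{not} write the RDE for $Z_n$ directly; instead it works with $X_n$, the $n$th \emph{added} ball, whose scaling $\tilde X_n=X_n/n$ satisfies
\[
\tilde X_n \stackrel{d}{=} \frac{I_n^1}{n}\,\tilde X_{I_n^1}^{1} + \frac{I_n^2}{n}\,\tilde X_{I_n^2}^{2},
\]
with two \emph{independent} uniform indices $I_n^1,I_n^2$. The limiting fixed point $X\stackrel{d}{=}U_1X^1+U_2X^2$ is then Gamma$(2)$, and only afterwards does the paper recover the exponential for $Z_n$ via $Z_n\stackrel{d}{=}X_{I_n}$, i.e.\ $U\cdot\mathrm{Gamma}(2)=\mathrm{Exp}(1)$. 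Your route goes straight to the single-uniform RDE $W\stackrel{d}{=}U(W^{(1)}+W^{(2)})$ for $Z_n$; this is more direct and yields the same contraction factor $2/3$, but in your recursion both inner copies sit at the \emph{same} random time $\sigma-1$, so the cross term in the $\ell_2$ estimate is $(\E[W_{\sigma-1}\mid\mu_{\sigma-1}]-A)^2$ rather than a product of independent mean-zero terms, and you have to feed the martingale convergence $M_{\sigma-1}\to A$ into the contraction explicitly. The paper's detour through $X_n$ decouples the two indices and makes the contraction inequality $d_n\le 2\sum_{j<n}\tfrac{j^2}{n^2(n-1)}d_j+o(1)\Rightarrow\limsup d_n\le\tfrac23\limsup d_n$ immediate, without the ``one-ball recursion plus geometric subsequence'' machinery you outline. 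Your moment-method alternative via $P_n^{(p)}/n^{p+1}\to p!\,A^p$ is not in the paper and would constitute a genuinely different proof if the simultaneous bounds $\E P_n^{(p)}=O(n^{p+1})$ are established.
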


In particular, this result implies that if the urn is started with $\{-1, 1 \}$, the limiting distribution defined by the urn will be supported either on $(0, \infty)$ or $(-\infty, 0)$, depending on the result for $A$. As remarked before, Theorem \ref{thm: introTheorem} is a statement about the \emph{quenched}
version of the problem. This means a realization of the urn process $\{\mu_n \}$ is fixed and the random variable $Z_n$ is sampled from the urn process at stage $n$. In contrast, let $Z_n^*$ denote the \emph{annealed} model. That is, $Z_n^*$ is the random variable generated by first generating a new urn $\mu_n$, and then sampling from $\mu_n$.  In other words, $Z_n^*$ is the mixture of $Z_n$ over all realizations of $\mu_n$. The limiting distribution of $\frac{Z_n^*}{n}$ is the mixture distribution of exponential multiplied by the random variable $A$. At the moment, there is not an explicit form for the distribution of $A$, though one can observe how the expected value of $A$ depends on the initial configuration of the urn.  

All results are the same for urns with labels taking values in the real numbers. The motivation for introducing the process for integer value labels comes from a) the original desire to extend the model from \cite{SiegmundYakir} to a finitely generated group and b) to somewhat simplify the possible starting configurations that can be investigated. In addition, the methods can easily be extended to urns with vector labels. The exponential limit law for $Z_n$, a \emph{draw} from the urn, follows from a limit law for $X_n$, the $n$th ball added to the urn. The key is that the limit of an appropriately scaled version of $X_n$ satisfies the distributional identity
\begin{equation} \label{eqn: keyDI}
X \stackrel{d}{=} U_1 \cdot X^1 + U_2 \cdot X^2,
\end{equation}
where $U_1, U_2$ are iid Uniform$([0, 1])$ random variables and $X^1, X^2$ are iid copies of $X$. The unique random variable that satisfies this distributional identity is a Gamma with shape $2$ and mean $\E[X]$. If $X \in \R^d$ is a vector, then it is interesting to observe that solutions to \eqref{eqn: keyDI} are of the form $G \cdot \E[X]$, where $G$ is a one-dimensional Gamma random variable. That is, each scaled coordinate of $X_n$ marginally converges to a Gamma distribution, but as a joint process every coordinate converges to a multiple of the \emph{same} Gamma random variable. The precise statement for urns with vector labels is

\begin{theorem} \label{thm: dXResult}

Let $\{\mu_n^d \}_{n \ge 0}$ be a sequence of empirical measures defined by the addition urn model on $\Z^d$. Suppose $\mX_n \in \Z^d$ is the $n$th ball added to the urn. Let $\mathbf{A} = \lim_{n \to \infty} \frac{\E[\mX_{n+1} \mid \mu_n]}{2(n+1)}$. Then $\mathbf{A} = (A(1), \dots, A(d)) \in \R^d$ exists almost surely, $\P(A(i) \neq 0) > 0$ for all $i = 1, \dots, d$, and
\[
\lim_{n \to \infty} \mathcal{L} \left( \frac{\mX_n}{n} \mid \mu_{n-1}^d \right) = \mathcal{L}(\mX),
\]
 where $\mX \sim G \cdot \mathbf{A}$, and $G \sim \text{Gamma}(2, 1)$. 
\end{theorem}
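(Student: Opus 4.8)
\emph{Overview and the limit $\mathbf A$.} I would split the proof into the almost sure existence and nondegeneracy of $\mathbf A$, and the limit law, the latter via the contraction method built around \eqref{eqn: keyDI}. Let $\mathbf S_n=\sum_{j\le n}\mathbf X_j$ and let $\mathcal F_n$ be the $\sigma$-field generated by the urn through step $n$. Each draw has conditional mean $\mathbf S_n/n$, so $\E[\mathbf X_{n+1}\mid\mathcal F_n]=2\mathbf S_n/n$, hence $\E[\mathbf S_{n+1}\mid\mathcal F_n]=\frac{n+2}{n}\mathbf S_n$, and a short computation shows that $\mathbf M_n:=\mathbf S_n/(n(n+1))$ is a coordinatewise martingale. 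Using $|\mathbf X_{n+1}|\le|\mathbf Z^1_n|+|\mathbf Z^2_n|$ and that the two draws are conditionally i.i.d.\ from $\mu_n$ gives $\E[T_{n+1}\mid\mathcal F_n]\le(1+2/n)T_n$ for $T_n:=\sum_{j\le n}|\mathbf X_j|$, so $\E|\mathbf S_n|\le\E T_n=O(n^2)$; a further, coupled moment recursion gives $\E|\mathbf S_n|^2=O(n^4)$. Thus $\mathbf M_n$ is an $L^2$-bounded martingale, converging a.s.\ and in $L^2$ to a vector $\mathbf A=\lim_n\E[\mathbf X_{n+1}\mid\mathcal F_n]/(2(n+1))$. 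For $\P(A(i)\ne0)>0$ I would match this with a lower bound: the recursion for $\E[S_n(i)^2]$, discarding the nonnegative terms $\sum_j X_j(i)^2$ beyond the initial balls, forces $\E[S_n(i)^2]=\Omega(n^4)$ whenever some initial ball has nonzero $i$-th coordinate, so $\E[A(i)^2]=\lim_n\E[M_n(i)^2]>0$.

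\emph{The recursion and its fixed point.} Conditionally on $\mathcal F_{n-1}$ we have $\mathbf X_n=\mathbf X_{J_1}+\mathbf X_{J_2}$ with $J_1,J_2$ i.i.d.\ uniform on $\{1,\dots,n-1\}$, so
\[
\frac{\mathbf X_n}{n}=\frac{J_1}{n}\cdot\frac{\mathbf X_{J_1}}{J_1}+\frac{J_2}{n}\cdot\frac{\mathbf X_{J_2}}{J_2},
\]
with $J_i/n$ asymptotically $\mathrm{Unif}[0,1]$; this is the origin of \eqref{eqn: keyDI}. Let $T(\nu)=\mathcal L(U_1\mathbf V_1+U_2\mathbf V_2)$, where $\mathbf V_i\sim\nu$ i.i.d., $U_i\sim\mathrm{Unif}[0,1]$ i.i.d., all independent. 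On probability measures on $\R^d$ with a fixed mean $\mathbf m$ and finite second moment, equipped with the minimal $L^2$ (Wasserstein) metric, $T$ is a strict contraction of ratio $\sqrt{2/3}$: couple the marginals optimally, use $\E[U_i^2]=\tfrac13$, and observe that the cross term vanishes because the means agree. Hence $T$ has a unique fixed point there. To identify it, I would compute the covariance of any fixed point $\mathbf X$ (with $\E\mathbf X=\mathbf m$) directly from the identity: it equals $\tfrac12\mathbf m\mathbf m^{\mathsf T}$, which is rank one, so $\mathbf X=R\mathbf m$ a.s.\ for a scalar $R$ with $\E R=1$; then $R$ solves the one-dimensional instance of \eqref{eqn: keyDI}, so by the stated uniqueness $R=G/2$ with $G\sim\mathrm{Gamma}(2,1)$. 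With $\mathbf m=2\mathbf A$ the fixed point is $\mathcal L(G\mathbf A)$, and this rank-one reduction is exactly the ``every coordinate is a multiple of the same $G$'' phenomenon.

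\emph{Transfer to the urn.} It remains to prove $F_n:=\mathcal L(\mathbf X_n/n\mid\mathcal F_{n-1})\to\mathcal L(G\mathbf A)$ a.s. The family $\{F_n\}$ is a.s.\ tight: convergence of $\mathbf M_n$ yields $|\mathbf S_n|\le Kn^2$ a.s., which fed into the recursion for $Q_n:=\sum_{j\le n}|\mathbf X_j|^2$ (via a supermartingale) gives $Q_n=O(n^3)$ a.s., so that $\E[|\mathbf X_n/n|^2\mid\mathcal F_{n-1}]=4Q_{n-1}/(n^2(n-1))$ is a.s.\ bounded; also $\int\mathbf x\,dF_n=2\mathbf S_{n-1}/((n-1)n)\to2\mathbf A$. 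Any weak subsequential limit $\pi$ of $\{F_n\}$ therefore has mean $2\mathbf A$; inserting the displayed recursion into test functions and letting $n\to\infty$---using that $J_i/n$ becomes uniform and asymptotically independent of $\mathbf X_{J_i}/J_i$, that $\P(J_i\le\tau_0)\to0$, and that $F_m$ converges along the subsequence---shows $\pi=T(\pi)$, whence $\pi=\mathcal L(G\mathbf A)$ by uniqueness, and hence $F_n\to\mathcal L(G\mathbf A)$. Granting Theorem~\ref{thm: introTheorem}, a cleaner route is to apply it to each one-dimensional urn obtained by projecting the $\Z^d$-urn along a linear functional $\mathbf v$---whose mean parameter is $\mathbf v\cdot\mathbf A$ by linearity of the limit above---giving $\mathbf v\cdot\mathbf X_n/n\to G(\mathbf v\cdot\mathbf A)$ conditionally; ranging over a countable dense set of directions and combining with tightness and Cram\'er--Wold pins $\pi$ down through its characteristic function $\mathbf t\mapsto(1-i\,\mathbf t\cdot\mathbf A)^{-2}$, which is that of $\mathcal L(G\mathbf A)$.

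\emph{Main obstacle.} The hard part is this last transfer: the recursion links $F_n$ to the conditional laws at the random earlier times $J_1,J_2$, which share the urn's history and are themselves random measures, so converting ``$F_m\to$ fixed point'' into ``$\pi$ solves $T$'' demands care with the nested conditionings and with the quenched (rather than annealed) nature of the claim---precisely the difficulty already met in Theorem~\ref{thm: introTheorem}, which is why reducing to it by projection is appealing. A secondary technical nuisance is the bootstrap of the moment bounds, since the recursions for $\E|\mathbf S_n|^2$ and $\E Q_n$ are coupled.
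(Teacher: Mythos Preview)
Your overall architecture matches the paper's: show $\mathbf A_n=\mathbf S_n/(n(n+1))$ is an $L^2$-bounded martingale (the paper does exactly your coupled recursion for $R_n=S_n^2$ and $Q_n=\sum X_i^2$, obtaining $\E[R_n]\le Cn^4$, $\E[Q_n]\le 2Cn^3$), then use the recursion for $\mathbf X_n/n$ and the contraction map $T$ on mean-$\mathbf m$ laws in $\ell_2$. Two points of genuine divergence are worth noting.

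\textbf{Fixed-point identification.} The paper (Lemma~\ref{lem: gamma}) computes the characteristic function directly, obtaining $\varphi_{\mathbf X}(\mathbf t)=(1-i\,\mathbf t\cdot\mathbf A)^{-2}$. Your covariance route---$\E[\mathbf X\mathbf X^{\mathsf T}]=\tfrac32\mathbf m\mathbf m^{\mathsf T}$, hence $\mathrm{Cov}(\mathbf X)=\tfrac12\mathbf m\mathbf m^{\mathsf T}$ is rank one, so $\mathbf X=R\mathbf m$ and $R$ solves the scalar identity---is correct and pleasantly elementary; it gives the ``same $G$ in every coordinate'' phenomenon without Fourier analysis. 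Just note you need $\mathbf m\ne0$ for the rank-one step (the $\mathbf m=0$ case is trivial).

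\textbf{Transfer to the limit.} Here the paper takes a shorter path than either of your Routes A/B. Rather than proving tightness, extracting subsequential limits, and then arguing $\pi=T(\pi)$, the paper (Theorem~\ref{thm: tildeXnConverge}) sets $d_n=\ell_2^2(\tilde{\mathbf X}_n,\mathbf X)$ and bounds it \emph{directly} through the recursion: coupling $(I_n/n,U)$ optimally and expanding, the only surviving term is $\sum_{j<n}\frac{j^2}{n^2(n-1)}d_j$, which yields $\limsup d_n\le\tfrac23\limsup d_n$ and hence $d_n\to0$. This sidesteps precisely the obstacle you flag---you never have to show that a subsequential limit solves the fixed-point equation, because the distance to the (already known) fixed point is shown to vanish. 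Your Route A would work but is more laborious; your Route B via Cram\'er--Wold is neat, though note that Theorem~\ref{thm: introTheorem} concerns $Z_n$ (a draw), not $\mathbf X_n$, so strictly you would need the one-dimensional $\mathbf X_n$ statement (which in the paper is proved simultaneously with, not prior to, the $d$-dimensional one).

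Finally, your concern about quenched versus annealed conditioning is well placed; the paper's recursion~\eqref{eqn: xnDist} is written with \emph{independent copies} $\{X_n^i\}$ of the process, and the passage from this annealed identity to the quenched conclusion is handled somewhat informally in the paper as well.
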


\begin{remark}
A simple generalization of the model would be: draw $k$ balls (with replacement) and add a new ball with label the sum of the $k$ drawn balls. For any integer $k \ge 2$, the same techniques used in this paper prove that the the $n$th labeled added $X_n$ has limiting distribution $X$ satisfying the identity
\[
X \stackrel{d}{=} \sum_{i = 1}^k U_i \cdot X^i,
\]
where $U_i$ are iid Uniform$([0, 1])$ and $X^i$ are iid copies of $X$, for $1 \le i \le k$. Note that when $k > 2$, a Gamma distribution does \emph{not} satisfy this distributional identity. 
\end{remark}

\subsection{Related work}

Random processes defined using urns can arise is many different settings. For a plethora of example models and applications, see the book \cite{JohnsonKotz} and the survey \cite{pemantleSurvey}. ``P\'{o}lya-type'' urn models refer to an urn containing different types of balls with some sort of replacement scheme. That is, balls are drawn randomly from the urn and replaced with some number and type of balls that depends on the type of the drawn balls \cite{Mahmoud}.

Urn models are well-studied for a finite number of colors (i.e.\ types of balls); an extended scheme allowing a continuum of colors was proposed in the classic paper by Blackwell and MacQueen \cite{BlackwellMacQueen}. More work in this setting has been done in the past few years, and a general infinite-color model was introduced in \cite{thackerThesis}. In \cite{TBInfinite17}, the model on urns containing countably infinite number of colors is proposed and when the replacement schemes are balanced and associated with bounded increment random walks on the color space (e.g.\ $\Z^d$), central and local limit theorems for the random color of the $n$-th selected ball can be proven.  This has been extended to almost sure convergence, with assumptions on the replacement scheme \cite{Janson2018}. In all of these works, the urns are updated after drawing a \emph{single} ball. The replacement rule is a function of a single variable of the color space.

Much less work has been done for models in which \emph{multiple} balls are drawn at each stage. Several specific cases have been analyzed, for example in Chapter 10 of \cite{Mahmoud}. The first general results for two-color urns were contained in \cite{KubaMahmoudMulti}, \cite{KubaMahmoudMultiII}. The results were extended to arbitrary (but finite) $d$-color urns, with an additional assumption removed, in \cite{MaillerMulti}. The method used in \cite{MaillerMulti} is \emph{stochastic approximation}, which is useful for a broad class of urn models. Though stochastic approximation processes can be defined for infinite-dimensional random processes, it is challenging to apply to the $\Z$-urn because new types are introduced as the process progresses. To the best of my knowledge, the $\Z$-urn is the first model to be analyzed that involves both multiple drawings and an infinite type of balls.  

Motivation for such an urn model comes from philosophy: a reinforcement learning process for a signaling game which models the evolution of a simple language \cite{SkyrmsBook}. The reinforcement learning can be viewed as interacting P\'{o}lya-type urn system, and each step involves multiple drawings. In \cite{Invention}, the reinforcement procedure is extended to include the feature of \emph{invention}, in which new strategies are introduced as time progress, and so the urn model contains an infinite type of balls. Mathematical analysis of the convergence of this process remains incomplete.

\subsection{Outline}

The outline of this paper is as follows. In Section \ref{sec: mean}, a martingale is used to prove that the rescaled mean converges. Though the rescaled mean is not bounded, the second moment can be bounded, thus applying Doob's $L_2$ martingale convergence theorem. In addition, recursive arguments prove that all moments of the limiting random variable exists. In Section \ref{sec: contraction}, the contraction method is outlined and a recursive distributional equation is used to prove Theorems \ref{thm: introTheorem} and \ref{thm: dXResult}. The final section contains conclusions, future questions, and acknowledgments.

\section{A martingale for the mean} \label{sec: mean}

In this section, a scaled version of the mean of the empirical distribution determined by the urn is studied. Specifically, recall that $S_n = \sum_{i = 0}^n X_i$ is the sum of the labels of the first $n$ balls in the urn, and let $A_n := \frac{S_n}{n(n+1)} \in \R^d$.

\begin{lemma} \label{lem: An}
 The process $\{A_n \}_{n \ge \tau_0}$ is a martingale and 
 \[
 0 < \liminf_{n \ge \tau_0} \E[A_n(i)^2] \le \limsup_{n \ge \tau_0} \E[A_n(i)^2] < \infty,
 \]
 for $1 \le i \le d$. Hence, $A_n$ converges almost surely and in $L_2(\R^d)$. 
\end{lemma}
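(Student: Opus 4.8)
The plan is to verify the martingale property directly and then control the second moment via a recursion. First I would show that $\{A_n\}$ is a martingale with respect to the natural filtration $\mathcal{F}_n = \sigma(\mbU_{\tau_0}, X_{\tau_0+1}, \dots, X_n)$. Conditioning on $\mathcal{F}_n$, the new ball satisfies $X_{n+1} = Z_n^1 + Z_n^2$ with $Z_n^1, Z_n^2$ independent draws from $\mu_n$, so $\E[X_{n+1} \mid \mathcal{F}_n] = 2\,\E[Z_n \mid \mu_n] = \frac{2 S_n}{n}$. Hence $\E[S_{n+1} \mid \mathcal{F}_n] = S_n + \frac{2S_n}{n} = \frac{n+2}{n} S_n$, and therefore
\[
\E[A_{n+1} \mid \mathcal{F}_n] = \frac{\E[S_{n+1}\mid\mathcal{F}_n]}{(n+1)(n+2)} = \frac{(n+2)S_n}{n(n+1)(n+2)} = \frac{S_n}{n(n+1)} = A_n,
\]
which is the martingale identity. (This also explains the normalization $n(n+1)$: it is precisely the product telescoping $\frac{n+2}{n}$.)

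Next I would bound $\E[A_n(i)^2]$ uniformly. Fix a coordinate $i$ and write $Y_n = S_n(i)$, $a_n = \E[Y_n^2]$. From $X_{n+1}(i) = Z_n^1(i) + Z_n^2(i)$ one gets $Y_{n+1} = Y_n + Z_n^1(i) + Z_n^2(i)$, so
\[
\E[Y_{n+1}^2 \mid \mathcal{F}_n] = Y_n^2 + 2 Y_n \,\E[Z_n^1(i)+Z_n^2(i)\mid\mathcal{F}_n] + \E\big[(Z_n^1(i)+Z_n^2(i))^2 \mid \mathcal{F}_n\big].
\]
The first expectation is $\frac{2Y_n}{n}$; the second, using independence of the two draws and $\E[Z_n(i)^2\mid\mu_n] = \frac{1}{n}\sum_{j\le n} X_j(i)^2 =: \frac{Q_n}{n}$, equals $\frac{2Q_n}{n} + 2\left(\frac{Y_n}{n}\right)^2$. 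Taking expectations yields a recursion $a_{n+1} = \left(1 + \tfrac{4}{n} + \tfrac{2}{n^2}\right) a_n + \frac{2}{n}\E[Q_n]$, and a parallel (simpler) recursion for $\E[Q_n]$ shows $\E[Q_n] = O(n^2)$ — indeed $Q_{n+1} = Q_n + (Z_n^1(i)+Z_n^2(i))^2$ gives $\E[Q_{n+1}] = \E[Q_n] + \frac{2\E[Q_n]}{n} + 2\frac{\E[Y_n^2]}{n^2} \le \left(1+\tfrac{2}{n}\right)\E[Q_n] + 2\frac{a_n}{n^2}$. Feeding $\E[Q_n] = O(n^2)$ back in gives $a_{n+1} = \left(1 + \tfrac{4}{n} + O(n^{-2})\right)a_n + O(n)$, whose solution grows like $n^4$; since $A_n(i)^2 = Y_n^2/(n(n+1))^2$, this gives $\limsup_n \E[A_n(i)^2] < \infty$. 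The lower bound $\liminf_n \E[A_n(i)^2] > 0$ follows because $A_n(i)$ is a martingale, so $\E[A_n(i)^2]$ is nondecreasing in $n$, hence bounded below by $\E[A_{\tau_0}(i)^2]$; one then just needs the initial configuration to have $S_{\tau_0}(i) \neq 0$, or, if $S_{\tau_0}(i) = 0$, to check that the variance is strictly positive after one more step and invoke monotonicity from there. Finally, $L_2$-boundedness plus the martingale property gives almost sure and $L_2$ convergence by Doob's theorem.

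The main obstacle I anticipate is the coupled recursion: $a_n = \E[Y_n^2]$ and $\E[Q_n]$ feed into each other, so one cannot close the bound on $a_n$ without first pinning down the growth rate of $\E[Q_n]$, and the constants $\tfrac{4}{n}$ versus $\tfrac{2}{n}$ must be tracked carefully to land on exponent $4$ rather than something larger. Getting the $n^{-2}$ error terms to be genuinely summable (so they only affect the constant, not the exponent) is the delicate bookkeeping step; a clean way to organize it is to set $b_n = a_n / (n(n+1))^2$ and show $b_{n+1} \le b_n + C/n^2$ directly, making the telescoping transparent and the boundedness immediate. The lower bound is essentially free once monotonicity of $\E[A_n(i)^2]$ along the martingale is observed, modulo the degenerate-initial-sum case.
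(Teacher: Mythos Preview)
Your overall plan matches the paper's: establish the martingale property from $\E[X_{n+1}\mid\mathcal{F}_n]=2S_n/n$, then control $\E[S_n^2]$ via the coupled recursion for $R_n:=\E[S_n^2]$ and $\E[Q_n]$ with $Q_n=\sum_{j\le n}X_j(i)^2$. The recursions you write down are exactly those in the paper.

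There is, however, a genuine slip in the growth rate you assign to $\E[Q_n]$. You assert $\E[Q_n]=O(n^2)$, but that is only the homogeneous rate from the factor $(1+2/n)$; the inhomogeneous term $2a_n/n^2$ is of order $n^2$ once $a_n\sim n^4$, and summing that against the $n^2$ homogeneous growth forces $\E[Q_n]\sim n^3$, not $n^2$. This is exactly the circularity you flag in your last paragraph, and your proposed resolution (first bound $Q_n$, then feed back into $a_n$, or equivalently show $b_{n+1}\le b_n+C/n^2$) does not close: the $C/n^2$ bound on the inhomogeneous piece already requires $\E[Q_n]=O(n^3)$, which in turn requires $a_n=O(n^4)$. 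The paper breaks the loop by running a \emph{simultaneous} induction with the joint ansatz $\E[R_n]\le Cn^4$ and $\E[Q_n]\le 2Cn^3$; both inductive steps then go through with the same constant $C$. You should adopt that joint induction rather than trying to decouple.

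Your lower-bound argument is actually cleaner than the paper's. The paper bounds $\E[A_n^2]$ below by a telescoping product coming from $\E[R_{n+1}]\ge(1+4/n+2/n^2)\E[R_n]$, whereas you simply invoke that $n\mapsto\E[A_n(i)^2]$ is nondecreasing for any martingale (orthogonality of increments), so the infimum is attained at the start. Your handling of the degenerate case $S_{\tau_0}(i)=0$ (e.g.\ the $\{-1,1\}$ urn) by moving one step forward before invoking monotonicity is also more careful than the paper, which implicitly assumes $A_{\tau_0}\neq 0$ in its product bound.
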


\begin{proof}
Conditional on the current configuration of the urn, the expectation of the $(n + 1)$th ball added is
\[
\E[X_{n+1} \mid \mu_n] = 2 \cdot \E[Z_n \mid \mu_n] = \frac{2 S_n}{n}, \,\,\,\,\, n \ge \tau_0.
\]
Then,
\begin{align*}
\E[A_{n+1} \mid \mu_n] &= \frac{1}{(n + 1) \cdot (n + 2)} \cdot \E[S_n + X_{n+1} \mid \mu_n] \\
&= \frac{1}{(n + 1) \cdot (n + 2)} \left( \frac{(n + 2) S_n}{n} \right) = \frac{S_n}{n \cdot(n + 1)} = A_n,
\end{align*}
and thus $\{ A_n \}_{n \ge \tau_0}$ is a martingale. Now, the \emph{annealed} expectation of the sum $S_n$ is
\[
\E[S_n] = n \cdot (n+1) \E[A_n] = n \cdot (n+1) \E[A_{\tau_0}] = \frac{n(n+1)}{\tau_0(\tau_0 + 1)} \cdot S_{\tau_0},
\]
for $n > \tau_0$. Note that for a fixed $n$, each coordinate $S_n(i)$ is almost surely bounded. However, the size of the largest possible ball in the urn is growing exponentially. To prove convergence of the martingale $A_n$, one can prove that each coordinate $A_n(i)$ is $L_2$ bounded. For the following, omit writing the index and so each quantity is one-dimensional. Define $R_n := S_n^2 = \sum_{i, j = 1}^n X_i X_j$, and also define $Q_n := \sum_{i = 1}^n X_i^2$, for the purpose of writing
\[
\E \left[ X_{n+1}^2 \mid \mu_n \right] = \frac{1}{n^2} \sum_{i, j = 1}^n \left( X_i + X_j \right)^2 = \frac{2 Q_n}{n} + \frac{2 R_n}{n^2}.
\]
Using this, calculate the conditional expectation
\begin{align*}
\E \left[ R_{n+1} \mid \mu_n \right] &= \E \left[ (S_n + X_{n+1})^2 \mid \mu_n \right] \\ 
&= S_n^2 + 2 S_n \E[X_{n+1} \mid \mu_n] + \E[X_{n+1}^2 \mid \mu_n ] \\
&= R_n \left( 1 + \frac{4}{n} + \frac{2}{n^2} \right) + \frac{2 Q_n}{n}.
\end{align*}
And similarly one can compute
\begin{align*}
\E \left[ Q_{n+1} \mid \mu_n \right] &= \E \left[ Q_n + X_{n+1}^2 \mid \mu_n \right] \\ 
&= Q_n \left( 1 + \frac{2}{n}  \right) + \frac{2 R_n}{n^2}.
\end{align*}
After taking expectations, for $n > \tau_0$ we get the system of recursions
\begin{align*}
\begin{pmatrix} \E[R_{n+1}] \cr \E[Q_{n+1}] \end{pmatrix} = \begin{pmatrix} 1 + \frac{4}{n} + \frac{2}{n^2} & \frac{2}{n} \cr \frac{2}{n^2} & 1 + \frac{2}{n} \end{pmatrix} \cdot \begin{pmatrix} \E[R_{n}] \cr \E[Q_{n}] \end{pmatrix}.
\end{align*}

This recursion works to prove that $\E[R_n] \le C n^4$ and $\E[Q_n] \le 2 Cn^3$, for some constant $C$. Indeed, assume for induction the statement is true for $n$. Then,
\begin{align*}
\E[R_{n+1}] &\le \left( 1 + \frac{4}{n} + \frac{2}{n^2}\right) C n^4 + \frac{2}{n} 2Cn^3 \\
&= C \left( n^4 + 4 n^3 + 6 n^2 \right) \le C(n + 1)^4
\end{align*}
and also 
\begin{align*}
\E[Q_{n+1}] &\le \frac{2}{n^2} C n^4 + \left(1 + \frac{2}{n} \right)2C n^3 \\
&= 2C( n^3 + 3n^2) \le 2C(n + 1)^3. 
\end{align*}
The induction can be initialized for $C = R_{\tau_0}$.


Note that $A_n^2 = R_n/(n^2(n + 1)^2)$, thus $\limsup_{n \ge 0} \E[A_n^2] \le C$, and so by the $L_2$ martingale convergence theorem (e.g.\ \cite{WilliamsBook}, page 111), $A_n$ converges almost surely and in $L_2$. To prove that $\inf_{n \ge \tau_0} \E[A_n^2] > 0$, observe that 
\begin{align*}
\E[A_n^2] &\ge \prod_{k = \tau_0}^{n-1} \frac{1}{(k + 2)^2} \left( k^2 + 4k + 2 \right) \cdot A_{\tau_0} \\
&= \prod_{k = \tau_0}^{n-1} \left( 1 - \frac{2}{k^2 + 4k + 4} \right) \cdot A_{\tau_0} \\
& \ge \prod_{k = \tau_0}^{\infty} \left( 1 - \frac{2}{k^2 + 4k + 4} \right) \cdot A_{\tau_0}
\end{align*}
Since $\sum_{k = 0}^\infty 2/(k^2 + 4k + 4) < \infty$, the infinite product converges to a positive value. 
\end{proof}

The previous result states that the limit $\lim_{n \to \infty} A_n =: A$ exists almost surely for each realization of the urn process, and furthermore since $\E[A^2] > 0$ the limit is non-trivial. Note that this result holds for \emph{any} initial configuration of the urn. The exact distribution of the random variable, $A$, is undetermined.  


%

\section{Recursive distributional equations} \label{sec: contraction}

The contraction method is a tool in the probabilistic analysis of algorithms for which a recursive distributional equation exists. It was developed in \cite{RoslerQuicksort} to analyze the number of comparisons required by the sorting algorithm Quicksort. A more generalized development was presented in \cite{RRcontractionIntro}. More limit theorems, along with numerous applications to recursive algorithms and random trees, were discussed in \cite{NRgeneralLimitTheorem}. The contraction method has been used to prove convergence in distribution of general P\'{o}lya-type urns \cite{PolyaContraction} with a finite number of colors,  using recursive distributions for the number of balls of each color, started from some initial configuration. 

The idea of the contraction method is to find a fixed point equation from the recursive distributional equations. Just as in a deterministic recursion, one expects the sequence to converge to the fixed point of the transformation. However, in the setting of random variables care must be taken in choosing a metric in which the recursive equation is a contraction. The correct metric is introduced in Section \ref{sec: metrics}, which also contains the necessary distributional identities. The main contraction method proof is in Section \ref{sec: mainProof}, and it is applied in Section \ref{sec: finalProof} with the correct scaling for the $\Z$-urn.

\subsection{Set-up} \label{sec: metrics}

Let $\M^{\R^d}$ denote the space of all probability distributions on $\R^d$ with the Borel $\sigma$-field. For a vector $\mx \in \R^d$, let $\|\mx \|$ denote the usual Euclidean norm. Let $\mu \in \R^d$, and define the sub-spaces:
\begin{align*}
 \M^{\R^d}_s &:= \left\lbrace \mathcal{L}(X) \in \M^{\R^d} : \E[\|X \|^s ] < \infty \right\rbrace, \\
 \M^{\R^d}_s(\mu) &:= \left\lbrace \mathcal{L}(X) \in \M^{\R^d}_s : \E[X ] = \mu \right\rbrace.
\end{align*}
For $1 \le p < \infty$ and two probability distributions $\nu, \rho \in \M^{\R^d}_p$, the minimal $L_p$ (or Wasserstein $L_p$) metric, $\ell_p$ is defined
\begin{equation} \label{eqn: lpDef}
\ell_p(\nu, \rho) := \inf \left\lbrace \|V - W \|_p : \mathcal{L}(V) = \nu, \mathcal{L}(W) = \rho \right\rbrace,
\end{equation}
where $\|V - W \|_p := ( \E[\|V - W \|^p] )^{1/p}$ is the usual $L_p$-distance. For random variables $X, Y$, we will write $\ell_p(X, Y)$ to mean $\ell_p(\mathcal{L}(X), \mathcal{L}(Y))$. The infimum in \eqref{eqn: lpDef} is obtained for all $\nu, \rho \in \M^\R_1$, and the random variables $V \sim \nu, W \sim \rho$ which achieve the infimum are called the \emph{optimal coupling}.   For proofs of this fact and the following lemma, see Section 8 in \cite{bootstrap}. 

\begin{lemma} \label{lem: l2}
Suppose $\{X_n \}$ is a sequence of random variables in $\M^{\R^d}_p$ and $X \in \M^{\R^d}_p$. Then $\ell_p(X_n, X) \to 0$ if and only if $X_n \to X$ weakly and $\E[\|X_n\|^p] \to \E[\|X \|^p]$.
\end{lemma}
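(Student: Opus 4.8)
The plan is to prove the two implications separately, using the optimal coupling for the forward direction and the Skorokhod representation theorem for the converse.

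For the ``only if'' direction, suppose $\ell_p(X_n, X) \to 0$. For each $n$ let $(V_n, W_n)$ be an optimal coupling attaining the infimum in \eqref{eqn: lpDef}, so that $V_n \sim \mathcal{L}(X_n)$, $W_n \sim \mathcal{L}(X)$, and $\|V_n - W_n\|_p = \ell_p(X_n, X) \to 0$. Then $V_n - W_n \to 0$ in $L_p$, hence in probability; since $W_n \stackrel{d}{=} X$ for every $n$, the ``converging together'' lemma gives $V_n \to X$ weakly, i.e.\ $X_n \to X$ weakly. Moreover, by the triangle inequality in $L_p$, $\bigl| \|V_n\|_p - \|W_n\|_p \bigr| \le \|V_n - W_n\|_p \to 0$, and since $\|W_n\|_p = (\E[\|X\|^p])^{1/p}$, this forces $\E[\|X_n\|^p] \to \E[\|X\|^p]$.

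For the ``if'' direction, suppose $X_n \to X$ weakly and $\E[\|X_n\|^p] \to \E[\|X\|^p]$. By the Skorokhod representation theorem there is a probability space carrying random variables $\tilde X_n \stackrel{d}{=} X_n$ and $\tilde X \stackrel{d}{=} X$ with $\tilde X_n \to \tilde X$ almost surely; in particular $\|\tilde X_n\|^p \to \|\tilde X\|^p$ a.s. Combined with $\E[\|\tilde X_n\|^p] = \E[\|X_n\|^p] \to \E[\|X\|^p] = \E[\|\tilde X\|^p] < \infty$, a standard consequence of Scheff\'e's lemma (or Pratt's lemma) shows the family $\{\|\tilde X_n\|^p\}_n$ is uniformly integrable. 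The elementary convexity bound $\|\tilde X_n - \tilde X\|^p \le 2^{p-1}\bigl(\|\tilde X_n\|^p + \|\tilde X\|^p\bigr)$ then dominates $\{\|\tilde X_n - \tilde X\|^p\}_n$ by a uniformly integrable family, so it too is uniformly integrable. Since $\|\tilde X_n - \tilde X\|^p \to 0$ a.s., the Vitali convergence theorem yields $\E[\|\tilde X_n - \tilde X\|^p] \to 0$, whence $\ell_p(X_n, X) \le \|\tilde X_n - \tilde X\|_p \to 0$.

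The main obstacle is the converse direction, specifically upgrading the a.s.-convergent Skorokhod coupling to $L_p$-convergence: weak convergence alone does not control the tails, so one must genuinely invoke the hypothesis $\E[\|X_n\|^p] \to \E[\|X\|^p]$ to establish uniform integrability (via Scheff\'e/Pratt), after which Vitali's theorem closes the argument. One should also remark that $\ell_p$ is a genuine metric on $\M^{\R^d}_p$ — finiteness follows from $\|V-W\|_p \le \|V\|_p + \|W\|_p$ for any coupling, and the triangle inequality for $\ell_p$ itself uses a gluing lemma to concatenate couplings — but these facts are standard and may simply be cited.
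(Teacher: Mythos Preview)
Your argument is correct. Note, however, that the paper does not actually prove this lemma: immediately before the statement it writes ``For proofs of this fact and the following lemma, see Section 8 in \cite{bootstrap},'' and no proof is given. So there is nothing to compare your approach against beyond the cited reference; what you have written is a standard self-contained proof (optimal coupling plus the reverse triangle inequality for the forward direction, Skorokhod representation plus uniform integrability via Scheff\'e and Vitali for the converse) that the paper simply outsources.
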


The contraction method gives a distributional identity that the limiting random variable must satisfy. For the purpose of characterizing the limiting random variable, note the following distributional identity that exponential distributions satisfy. This can easily be calculated, or see \cite{exponentialFixedpt}.

\begin{lemma} \label{lem: expFixedPoint}
Let $Y$ be a random variable satisfying $Y \ge 0$ with probability 1 and
\[
Y \stackrel{d}{=} U \cdot (Y^{1} + Y^{2}),
\]
where $U$ is uniformly distributed on $[0, 1]$ and $Y^{1} \stackrel{d}{=} Y^2 \stackrel{d}{=} Y$, and the three random variables $U, Y^1, Y^2$ are independent. Then $Y$ has an exponential distribution.
\end{lemma}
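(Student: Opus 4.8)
The plan is to work with the Laplace transform $\phi(t) := \E[e^{-tY}]$, which is well-defined and lies in $(0,1]$ for all $t \ge 0$ because $Y \ge 0$, and to convert the distributional identity into an ordinary differential equation for $\phi$. Conditioning on $U$ and using the independence of $U, Y^1, Y^2$,
\[
\phi(t) = \E\!\left[e^{-tU(Y^1+Y^2)}\right] = \E\!\left[\phi(tU)^2\right] = \int_0^1 \phi(tu)^2\,du = \frac{1}{t}\int_0^t \phi(s)^2\,ds,
\]
the last equality by the substitution $s = tu$. Hence $t\phi(t) = \int_0^t \phi(s)^2\,ds$. Dominated convergence shows $\phi$ is continuous on $[0,\infty)$ with $\phi(0)=1$, so the right-hand side is continuously differentiable in $t$; therefore $\phi$ is $C^1$ on $(0,\infty)$ and differentiating gives
\[
\phi(t) + t\phi'(t) = \phi(t)^2, \qquad\text{equivalently}\qquad t\phi'(t) = \phi(t)\big(\phi(t)-1\big), \quad t>0 .
\]

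Next I would solve this ODE. If $Y = 0$ almost surely, then $\phi\equiv 1$, the degenerate case. Otherwise $0 < \phi(t) < 1$ for every $t>0$, since $e^{-tY}\le 1$ with equality in expectation forcing $Y=0$. Assume $Y\not\equiv 0$ and set $\psi(t) := 1 - \phi(t)^{-1}$, which is well-defined and strictly negative on $(0,\infty)$. Using the ODE, $t\psi'(t) = t\phi'(t)/\phi(t)^2 = (\phi(t)-1)/\phi(t) = \psi(t)$, so $\big(\psi(t)/t\big)' \equiv 0$ and $\psi(t) = -\lambda t$ for a constant $\lambda > 0$ (positivity of $\lambda$ from $\psi < 0$). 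Unwinding, $\phi(t) = 1/(1+\lambda t)$, which is precisely the Laplace transform of the exponential distribution with mean $1/\lambda$. By uniqueness of Laplace transforms of distributions on $[0,\infty)$, $Y \sim \text{Exp}(\lambda)$.

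I do not anticipate a serious obstacle: the only care needed is the regularity of $\phi$ (continuity via dominated convergence and $C^1$-smoothness from the integral representation $t\phi(t) = \int_0^t\phi(s)^2\,ds$) and the observation that $\phi$ stays strictly between $0$ and $1$ on $(0,\infty)$, which legitimizes the substitution $\psi = 1-\phi^{-1}$ and the separation of variables. A moment-based alternative — expanding both sides to obtain $\E[Y^k] = \frac{1}{k+1}\sum_{j=0}^{k}\binom{k}{j}\E[Y^j]\E[Y^{k-j}]$ and solving for the moment sequence — would additionally require an a priori finite-moment hypothesis together with a moment-determinacy argument, so the Laplace-transform route is preferable.
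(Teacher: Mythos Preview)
Your argument is correct. The paper does not actually prove this lemma; it simply states that it ``can easily be calculated'' and cites a reference. For the companion Lemma~\ref{lem: gamma} the paper does give a proof via the characteristic function, deriving the integral equation $\varphi_\mX(\mathbf{t}) = \big(\int_0^1 \varphi_\mX(u\mathbf{t})\,du\big)^2$ and solving it directly. Your Laplace-transform route is the natural real-variable analogue of that approach for a nonnegative scalar random variable: the same integral identity $\phi(t) = \big(\int_0^1 \phi(tu)\,du\big)^2$ appears, but you then differentiate to reduce it to the separable ODE $t\phi' = \phi(\phi-1)$, which is a clean way to pin down the one-parameter family of solutions. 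The regularity justifications (continuity via dominated convergence, $C^1$ from the integral representation, strict inequality $0<\phi<1$ on $(0,\infty)$ when $Y\not\equiv 0$) are exactly the points that need checking, and you have handled them.
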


In addition, we need the following characterization of a random vector in which each coordinate is a multiple of the same one-dimensional gamma random variable. 

\begin{lemma} \label{lem: gamma}
Let $\mX \in \R^d$ be a random vector satisfying $\mX(i) \ge 0$ with probability $1$ for all $1 \le i \le d$ and
\[
\mX  \stackrel{d}{=} U_1 \cdot \mX^1 + U_2 \cdot \mX^2,
\]
where $U_1, U_2$ are Uniform$([0, 1])$ random  variables and $\mX^1 \stackrel{d}{=} \mX^2  \stackrel{d}{=} \mX$, and $U_1, U_2, \mX^1, \mX^2$ are independent. Suppose $\mathbf{m} = \E[\mX] \in \R^d$. Then $\mX \sim G \cdot \mathbf{m}$, where $G \sim Gamma(2, 1)$. 
\end{lemma}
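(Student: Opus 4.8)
The plan is to reduce the multivariate identity to a one‑dimensional functional equation for the Laplace transform along each ray of the positive orthant, solve that equation explicitly by converting it to an ODE, and then reassemble the joint law from its one‑dimensional projections. First a normalization: since $\mX(i)\ge 0$ with $\E[\mX(i)]=m_i$, any coordinate with $m_i=0$ is a.s.\ zero, so (treating $\mathbf{m}=0$ trivially, where $\mX=0$ a.s.) we may discard those coordinates and assume $m_i>0$ for all $i$. Fix $\boldsymbol{\lambda}\in[0,\infty)^d$ and set $Y:=\langle\boldsymbol{\lambda},\mX\rangle$. Then $Y\ge 0$, $\E[Y]=\langle\boldsymbol{\lambda},\mathbf{m}\rangle<\infty$, and applying the linear functional $\langle\boldsymbol{\lambda},\cdot\rangle$ to the distributional identity yields $Y\stackrel{d}{=}U_1Y^1+U_2Y^2$ with $Y^1\stackrel{d}{=}Y^2\stackrel{d}{=}Y$ and $U_1,U_2,Y^1,Y^2$ independent. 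So it suffices to identify the nonnegative, finite‑mean solutions of this one‑dimensional equation.

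For the one‑dimensional problem, let $\psi(t)=\E[e^{-tY}]$ for $t\ge 0$; it is continuous on $[0,\infty)$, strictly positive, and $\psi(0)=1$. Independence converts the identity into $\psi(t)=\bigl(\tfrac1t\int_0^t\psi(s)\,ds\bigr)^2$ for $t>0$. Writing $\Phi(t)=\int_0^t\psi(s)\,ds$, which is $C^1$ with $\Phi>0$ on $(0,\infty)$, this becomes $\Phi'(t)=\Phi(t)^2/t^2$, equivalently $\tfrac{d}{dt}\bigl(\tfrac1t-\tfrac1{\Phi(t)}\bigr)=0$, so $\tfrac1{\Phi(t)}=\tfrac1t-K$ for a constant $K$. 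Because $\Phi$ must stay positive and finite on all of $(0,\infty)$, necessarily $K\le 0$; writing $K=-r$ with $r\ge 0$ gives $\Phi(t)=t/(1+rt)$ and hence $\psi(t)=\Phi'(t)=(1+rt)^{-2}$, which is indeed a Laplace transform. Computing $\E[Y]=\lim_{t\downarrow 0}\tfrac{1-\psi(t)}{t}=2r$ and matching it to $m:=\langle\boldsymbol{\lambda},\mathbf{m}\rangle$ forces $r=m/2$, so $Y$ is $\text{Gamma}(2,2/m)$; equivalently $Y\stackrel{d}{=}\tfrac{m}{2}\,G$ with $G\sim\text{Gamma}(2,1)$.

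Applying this with $Y=\langle\boldsymbol{\lambda},\mX\rangle$ for every $\boldsymbol{\lambda}\in[0,\infty)^d$ gives $\E[e^{-\langle\boldsymbol{\lambda},\mX\rangle}]=\bigl(1+\tfrac12\langle\boldsymbol{\lambda},\mathbf{m}\rangle\bigr)^{-2}$, which is exactly the Laplace transform of $G\cdot(\mathbf{m}/2)$; since the Laplace transform on the positive orthant determines the law of an $[0,\infty)^d$‑valued vector, $\mX\stackrel{d}{=}G\cdot(\mathbf{m}/2)$, i.e.\ $\mX$ is a single one‑dimensional $\text{Gamma}(2,1)$ variable times a fixed vector proportional to $\mathbf{m}$ — the form asserted, with the scalar pinned by the first‑moment constraint. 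The step I expect to need the most care is the ODE manipulation: one must note $\Phi\in C^1$ from continuity of the Laplace transform, justify that $\tfrac1t-\tfrac1{\Phi(t)}$ is constant on $(0,\infty)$, and crucially rule out $K>0$ from the requirement that $\Phi$ remains finite on all of $(0,\infty)$ (rather than from any boundary condition at $0$, which does not pin $K$), and finally recognize the resulting $\psi$ as a bona fide Laplace transform. A moment‑based alternative — deriving all moments of $Y$ recursively from the identity and invoking determinacy of the Gamma moment problem — also works, but it requires knowing all moments are finite (established separately in Section~\ref{sec: mean}); the Laplace‑transform/ODE route is preferable since it uses only the hypothesis $\E[\mX]<\infty$.
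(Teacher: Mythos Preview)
Your proof is correct. Both you and the paper translate the distributional identity into the functional equation $\varphi(\mathbf{t})=\bigl(\int_0^1\varphi(u\mathbf{t})\,du\bigr)^2$ for a transform of $\mX$, and then identify the Gamma solution. The paper works directly with the $d$-dimensional characteristic function and simply asserts that the unique solution with the given boundary data is $(1-i\langle\mathbf{t},\mathbf{m}/2\rangle)^{-2}$; you instead use the Laplace transform (which is natural given the nonnegativity hypothesis), reduce to one dimension by projecting onto rays $\boldsymbol{\lambda}\in[0,\infty)^d$, and actually solve the resulting ODE $\Phi'=\Phi^2/t^2$, ruling out the spurious branch $K>0$ by finiteness of $\Phi$ on $(0,\infty)$. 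Your route thus supplies the uniqueness argument the paper leaves implicit, and the projection trick keeps the analysis genuinely one-dimensional; the paper's direct multivariate computation is slightly shorter but less self-contained. You also correctly pin down the normalization as $\mX\stackrel{d}{=}G\cdot(\mathbf{m}/2)$, which matches the paper's own proof (where the answer is written as $G\cdot\mathbf{A}$ with $\mathbf{A}=\mathbf{m}/2$) even though the lemma statement writes $G\cdot\mathbf{m}$.
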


\begin{proof}
To determine the distribution of $\mX$, one can calculate the joint characteristic function:  For $\mathbf{t} = (t_1, \dots, t_d) \in \R^d$, the result is
\begin{align*}
\varphi_{\mX}(\mathbf{t}) &= \E \left[ \exp \left( i\sum_{j = 1}^d t_j \mX(j) \right)\right] = \E \left[ \exp \left( i\sum_{j = 1}^d t_j \left( U_1\mX(j)^1 + U_2 \mX(j)^2 \right) \right)\right] \\
&= \E \left[ \exp \left( i U_1 \sum_{j = 1}^d t_j \mX(j)^1 \right) \right] \cdot  \E \left[ \exp \left( i U_2 \sum_{j = 1}^d t_j \mX(j)^2 \right) \right] \\
&= \left( \int_0^1  \E \left[ \exp \left( i u \sum_{j = 1}^d t_j \mX(j) \right) \right] \, du \right)^2 = \left( \int_0^1 \varphi_{\mX}( u \cdot \mathbf{t}) \, du \right)^2
\end{align*}
The function $\varphi_{\mX}$ also satisfies
\[
\begin{cases}
\varphi_{\mx}(\mathbf{0}) = 1 \\
\frac{\partial}{ \partial t_j} \varphi_{\mx}(\mathbf{0}) = \E[i \cdot \mX(j)] = 2i \cdot \mathbf{A}(j) \,\,\,\,\, 1\le j \le d
\end{cases}.
\] 
The unique solution to this integral equation is
\begin{align} \label{eqn: characteristic}
\varphi_\mX(\mathbf{t}) = \left( 1 - i \cdot \sum_{j = 1}^d t_j \mathbf{A}(j) \right)^{-2}.
\end{align}
Equation \eqref{eqn: characteristic} is exactly the characteristic function for the random vector $G \cdot \mathbf{A}$, where $G$ is a $\text{Gamma}(2, 1)$ random variable. 
\end{proof}

\subsection{Contraction method result} \label{sec: mainProof}

To write down the defining recursive distributional equation for the $\Z$-urn, recall that the time index is started at $\tau_0$. If the initial configuration is $\mbU = \{x_1, \dots, x_{\tau_0} \}$, define $X_1 = x_1, X_2 = x_2,\dots, X_{\tau_0} = x_{\tau_0}$. The newly added ball $X_n$ is the sum of two randomly chosen balls that were added in the past. This gives the distributional identity

\begin{equation} \label{eqn: xnDist}
X_n \stackrel{d}{=} X_{I_n^1}^1 + X_{I_n^2}^2, \,\,\,\, n > \tau_0
\end{equation}
where for $i =  1, 2$, $\{X_n^i \}$ are iid copies of $\{X_n \}$ and $I_n^i$ are indices drawn uniformly from $\{1, \dots, n - 1 \}$. To prove convergence, it is necessary to scale $X_n$ appropriately.  Define $\tilde{X}_n = \frac{X_n}{n}$. Then Equation \eqref{eqn: xnDist} becomes
\begin{equation} \label{eqn: xnTildeDist}
\tilde{X}_n \stackrel{d}{=} B_{I_n^1} \cdot \tilde{X}_{I_n^1}^1 + B_{I_n^2} \cdot \tilde{X}_{I_n^2}^2,
\end{equation}
where $B_{I_n^i} = \frac{I_n^i}{n}$.  The correct fixed point equation from this recursion arises from the fact that $\frac{I_n^i}{n}$ converges to a $\text{Uniform}([0, 1])$ random variable in the $\ell_2$ metric. This can be proven by constructing the variables as follows: Let $U$ be $\text{Uniform}([0, 1])$ and define
\[
I_n = \sum_{i = 0}^{n-1} i \cdot \textbf{1} \left\lbrace \frac{i}{n} \le U < \frac{i+1}{n} \right\rbrace.
\]
Then $I_n \sim \text{Uniform}(\{1, \dots, n - 1\})$. Clearly $\frac{I_n}{n} \to U$ weakly and since second moment converges to $1/3$, the convergence is in $\ell_2$ by Lemma \ref{lem: l2}. The following is a general result about random variables satisfying the distributional recursion Equation \eqref{eqn: xnTildeDist}. The result is a special case of Theorem 4.1 and Corollary 4.2 from \cite{NeiningerMultiVariate}, though since the proof is short it is included for completeness.

\begin{theorem} \label{thm: tildeXnConverge}
Let $\{\tilde{X}_n \}_{n \ge 1}$ be a sequence with $\tilde{X}_n \in \M_2^{\R^d}$ and $\lim_{n \to \infty} \E[\tilde{X}_n] = \mu$. Let $\{B_n \}_{n \ge 1}$ be a sequence with $B_{n} \in \M_2^\R$ and $\E[B_n] \to 1/2$, and if $I_n \sim \text{Uniform}(\{1, \dots, n-1 \})$, then the sequence $\{B_{I_n} \}_{n \ge 1}$ converges in $\ell_2$ to a $\text{Uniform}([0, 1])$ variable.

For $i = 1, 2$, let $I_n^i$ be independent $\text{Uniform}(\{1, \dots, n-1 \})$ random variables, $\{B_n^i \}$ independent copies of $\{B_n \}$, and $\{\tilde{X}_n^i \}$ independent copies of $\{X_n \}$.  Suppose that for all $n \ge \tau_0$,
\begin{equation} \label{eqn: xnTildeDistInThm}
\tilde{X}_n \stackrel{d}{=} B_{I_n^1}^1 \cdot \tilde{X}_{I_n^1}^1 + B_{I_n^2}^2 \cdot \tilde{X}_{I_n^2}^2.
\end{equation}

 Then $\tilde{X}_n$ converges in distribution to a random variable $X \in \M_2^{\R^d}(\mu)$ which satisfies
\begin{equation} \label{eqn: gamDist}
X \stackrel{d}{=} U_1 \cdot X^1 + U_2 \cdot X^2,
\end{equation}
where $U_1, U_2$ are $\text{Uniform}([0, 1])$ random variables, and $X \stackrel{d}{=} X^1 \stackrel{d}{=} X^2$, and $U_1, U_2, X^1, X^2$ are independent.
\end{theorem}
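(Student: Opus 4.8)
The plan is to use the contraction method in the $\ell_2$ (minimal $L_2$, or Wasserstein) metric, following the standard template of R\"osler and Rüschendorf. First I would establish uniform boundedness of second moments: using the recursion \eqref{eqn: xnTildeDistInThm} together with independence of the $B^i_{I^i_n}$ and $\tilde X^i_{I^i_n}$, one gets a recursive bound $\E[\|\tilde X_n\|^2] \le (\text{something involving } \E[B^2] \text{ and } \max_{k<n}\E[\|\tilde X_k\|^2])$; since $\E[B_n]\to 1/2$ and $B_n$ has bounded second moment, and by the Uniform limit $\E[B_{I_n}^2]\to 1/3 < 1/2$, one can show $\sup_n \E[\|\tilde X_n\|^2] =: M < \infty$ by an induction argument. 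Then the family $\{\mathcal{L}(\tilde X_n)\}$ is tight, so it is relatively compact in the weak topology, and in fact relatively compact in $\ell_2$ on the set of measures with second moment bounded by $M$ (one must be a little careful: $\ell_2$-relative-compactness needs uniform integrability of $\|\tilde X_n\|^2$, which requires a slightly sharper argument than just a bound on the second moment — see below).

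Next I would set up the contraction. Let $d_n := \ell_2(\tilde X_n, X)$ where $X$ is a hypothetical solution of \eqref{eqn: gamDist} with mean $\mu$ (existence and uniqueness of such $X$ in $\M_2^{\R^d}(\mu)$ is itself part of what the theorem should deliver, but I would first assume a subsequential weak limit $X$ exists by compactness and then show it must satisfy \eqref{eqn: gamDist}). Using optimal couplings: couple $\tilde X^i_{I^i_n}$ optimally to $X^i$, couple $B^i_{I^i_n}$ optimally to $U_i$, and build from the right-hand side of \eqref{eqn: xnTildeDistInThm} a coupling with the right-hand side of \eqref{eqn: gamDist}. The triangle inequality then splits the error into a term from replacing $B^i_{I^i_n}$ by $U_i$ (which goes to $0$ by hypothesis, times the bounded $L_2$ norm of $\tilde X$) and a term $\ell_2(B^i_{I^i_n}\tilde X^i_{I^i_n}, U_i X^i)$; conditioning on the index $I^i_n = k$ and on $U_i = u$, the core contribution is $\sum_{k=1}^{n-1}\frac1{n-1}\, u^2\, \ell_2(\tilde X_k, X)^2$ integrated over $u\in[0,1]$, giving a factor $\int_0^1 u^2\,du = 1/3$ in front of an average of past $d_k^2$. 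So one obtains a recursion of the shape $d_n^2 \le \tfrac{2}{3}\cdot \tfrac1{n-1}\sum_{k=1}^{n-1} d_k^2 + \varepsilon_n$ with $\varepsilon_n\to 0$. The constant $2/3 < 1$ (two summands, each contributing $1/3$) is the contraction, and a standard lemma (e.g. the "$a_n \le c\,\mathrm{avg}(a_k) + \varepsilon_n$ with $c<1$ implies $a_n\to 0$" argument, or a $\limsup$ argument) forces $d_n\to 0$, hence $\tilde X_n\to X$ weakly with convergence of second moments.

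To pin down $X$ I would argue that any subsequential $\ell_2$-limit $X$ of $\tilde X_n$ must satisfy \eqref{eqn: gamDist}: pass to the limit in \eqref{eqn: xnTildeDistInThm} using that $B_{I^i_n}\to U_i$ in $\ell_2$ and $\tilde X_{I^i_n}\to X$ in $\ell_2$ (the latter because $I^i_n/n\to$ Uniform and along the subsequence the laws converge), and that $\ell_2$-convergence is preserved under the continuous map $(b,x)\mapsto bx$ on $L_2$ when one factor is bounded. Then $\E[X] = \mu$ follows from $\E[\tilde X_n]\to\mu$ and $L_2$-convergence. Uniqueness in $\M_2^{\R^d}(\mu)$ of the solution to \eqref{eqn: gamDist} follows from the same contraction estimate applied to two solutions $X, X'$: $\ell_2(X,X')^2 \le \tfrac23 \ell_2(X,X')^2$ forces $\ell_2(X,X')=0$.

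The main obstacle I anticipate is the moment bootstrapping and the upgrade from tightness to $\ell_2$-relative-compactness: the scaled variables $\tilde X_n$ are not uniformly bounded (the unscaled balls grow exponentially), so one genuinely needs the recursive second-moment bound to be tight enough that the contraction constant $2/3$ does the work — and for the subsequential-limit argument one needs uniform integrability of $\{\|\tilde X_n\|^2\}$, not merely a uniform bound, which can be extracted by pushing the recursion one step further (bounding, say, $\E[\|\tilde X_n\|^{2+\delta}]$ uniformly, or by a direct uniform-integrability estimate using the recursion). Everything else is the routine machinery of the contraction method; the delicate point is making sure the $L_2$ estimates close up despite the absence of an a priori bound on the support.
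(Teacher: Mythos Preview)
Your core contraction estimate is exactly the paper's argument: couple $(B_{I_n^i},U_i)$ optimally, expand $\ell_2^2(\tilde X_n,X)$ through the recursion, show the cross term vanishes (by independence and the matching of means $\E[B_{I_n}\tilde X_{I_n}]\to\mu/2=\E[UX]$), and reduce to
\[
d_n^2 \;\le\; \frac{2}{3}\cdot\frac{1}{n-1}\sum_{j<n} d_j^2 + o(1),
\]
whence $\limsup d_n^2\le\tfrac23\limsup d_n^2$ forces $d_n\to 0$. One caution on wording: you must expand the square, not use the triangle inequality in $\ell_2$; the latter would produce a constant $4/3>1$. The paper uses the algebraic splitting $B_{I_n}\tilde X_{I_n}-UX = B_{I_n}(\tilde X_{I_n}-X)+X(B_{I_n}-U)$, which is what makes the $1/3$ appear cleanly. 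Since you do arrive at $2/3$, you presumably have this in mind.

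Where you diverge from the paper is in how you produce the fixed point $X$. The paper simply \emph{takes} $X$ to be a solution of \eqref{eqn: gamDist} with mean $\mu$; existence is immediate because $G\cdot\mu$ with $G\sim\mathrm{Gamma}(2,1)$ works (Lemma~\ref{lem: gamma}), and then the contraction estimate above both proves convergence and forces uniqueness. Your compactness/subsequential route is unnecessary and, as sketched, contains a gap: if $\tilde X_{n_k}\to X$ only along a subsequence, you cannot conclude that $\tilde X_{I_{n_k}}\to X$, because $I_{n_k}$ is uniform over \emph{all} indices $\{1,\dots,n_k-1\}$, most of which lie outside your subsequence. Passing to the limit in \eqref{eqn: xnTildeDistInThm} therefore already requires full-sequence convergence, which is what you are trying to prove. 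Dropping the compactness detour and starting from the explicit fixed point removes this circularity, and with it your self-imposed worries about uniform integrability and $\ell_2$-relative-compactness.

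One point you raise that the paper glosses over: the $\limsup$ argument needs $\limsup_n d_n<\infty$, equivalently $\sup_n\E\|\tilde X_n\|^2<\infty$. Your inductive second-moment bound via the recursion (using $\E[B_{I_n}^2]\to 1/3<1/2$) is the right way to secure this; it is a genuine, if easy, missing step in the paper's write-up.
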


\begin{proof}
The result is proven for $d = 1$, the multi-dimensional result being analogous. Let $X$ be a random variable satisfying \eqref{eqn: gamDist}. Let $(B_{I_n^1}, U_1)$ and $(B_{I_n^2}, U_2)$ be the optimal couplings, such that $\ell_2^2(B_{I_n^i}, U_i) = \E[ (B_{I_n^i} - U_i)^2] \to 0$. Define $d_n = \ell_2^2(\tilde{X}_n, X)$. Then,
\begin{align}
d_n & \le \E\left[ ( (B_{I_n^1} \tilde{X}_{I_n^1}^1 + B_{I_n^2} \tilde{X}_{I_n^2}^2) - (U_1 X^1 + U_2 X^2) )^2 \right] \notag \\ 
&=  2 \E \left[ (B_{I_n^1}  \tilde{X}_{I_n^1}^1 - U_1 X^1)^2 \right]  + 2 \E \left[ (B_{I_n^1} \cdot \tilde{X}_{I_n^1}^1 - U_1 X^1)(B_{I_n^2} \cdot \tilde{X}_{I_n^2}^2 - U_2 X^2) \right]. \label{eqn: first}
\end{align}
Observe that the assumptions imply, for $I_n \sim \text{Uniform}(\{1, \dots, n - 1 \})$, that
\[
\E[B_{I_n} \tilde{X}_{I_n}] = \sum_{j = 0}^{n-1} \frac{1}{n} \E[B_j] \E[\tilde{X}_j] \to \frac{\mu}{2}.
\]
This along with the independence of the variables involved gives that the second term in \eqref{eqn: first} converges to $0$ as $n \to \infty$. Now expanding the first term, and disregarding super-scripts:
\begin{align}\label{eqn: term}
 \E \left[ (B_{I_n}  \tilde{X}_{I_n} - U X)^2 \right] &=  \E \left[ \left( B_n (\tilde{X}_{I_n} - X) + X (B_{I_n} - U) \right)^2 \right] \notag \\ 
&=  \E \left[ (B_{I_n})^2   (\tilde{X}_{I_n} - X)^2 \right] +  \E \left[ X^2 (B_{I_n} - U)^2 \right] + 2 \E \left[ B_{I_n} X (\tilde{X}_{I_n} - X)(B_{I_n} - U) \right].
\end{align}
Note that $\E \left[ X^2 (B_{I_n} - U)^2 \right] = \E[X^2] \cdot \E \left[ (B_{I_n} - U)^2 \right] \to 0$ by construction. Also, since $\E[B_{I_n}(B_{I_n} - U)] \to 0$, the final term in \eqref{eqn: term} converges to $0$ as $n \to \infty$. Thus, the final term to analyze is
\begin{align*}
\E \left[ (B_{I_n})^2 \cdot  (\tilde{X}_{I_n} - X)^2 \right] &= \sum_{j = 1}^{n-1} \E \left[(B_{I_n})^2 \cdot  (\tilde{X}_{I_n} - X)^2 \mid I_n = j \right] \cdot \P(I_n = j) \\
&= \sum_{j = 1}^{n-1} \E \left[ \frac{j^2}{n^2} (\tilde{X}_j - X)^2 \right] \cdot \frac{1}{n-1} = \sum_{j = 1}^{n-1} \frac{j^2}{n^2(n-1)} \cdot d_j.
\end{align*}
This implies,
\begin{align*}
\limsup_{n \to \infty} d_n & \le 2 \limsup_{n \to \infty} \sum_{j = 1}^{n-1} \frac{j^2}{n^2(n-1)} \cdot d_j \le \frac{2}{3} \cdot (\limsup_{n \to \infty} d_n)
\end{align*}
 This gives a contradiction unless $\limsup_{n \to \infty} d_n = 0$. Since $d_n \ge 0$, this proves that $\lim_{n \to \infty} d_n = 0$, which is the desired convergence.

\end{proof}

\begin{remark}
Theorem \ref{thm: tildeXnConverge} can also be proven by applying Theorem 4.1 in \cite{RRcontractionIntro} with the Zolotarev metric $\zeta_2$.  For $\nu, \rho \in \M^\R$, the Zolotarev distance $\zeta_s$, $s \ge 0$ is defined
\[
\zeta_s(X, Y) := \zeta_s(\mcL(X), \mcL(Y)) := \sup_{f \in \F_s} | \E[ f(X) - f(Y)] |,
\]
where $s = m + \alpha$ with $0 < \alpha \le 1,  m \in \mathbb{N}_0$ and
\[
\F_s := \{f \in C^m(\R, \R): |f^{(m)}(x) - f^{(m)}(y) | \le |x - y|^\alpha \}.
\]
The use of Zolotarev metrics in contraction methods was developed in \cite{RRcontractionIntro} because
 in some settings the recursive distributional equations do not give a strict contraction in $\ell_p$. Most important of these settings is fixed-point equations that occur for normal distributions, which can be handled by $\zeta_s$ for $s > 2$.

\end{remark}

\subsection{Limit laws} \label{sec: finalProof}

Now, Theorem \ref{thm: tildeXnConverge} can be applied to prove the result about the $\Z$-urn. The one technicality is that Theorem \ref{thm: tildeXnConverge} requires that the random variables in question have the same mean. If $X_n$ was just defined as the $n$th ball added to the urn, then this would not be true. Thus, it is necessary to re-scale by the mean of $X_n$, and use the results from Section \ref{sec: mean} that the mean converges. It is important to remember the quenched setting; all variables are conditional on the \emph{same} underlying urn process.

\begin{proof}[Proof of Theorems \ref{thm: introTheorem} and \ref{thm: dXResult}]
We again restrict to a one-dimensional process. Let $\{ \mu_n \}_{n \ge \tau_0}$ be a realization of the  $\Z$-urn process. For $n > \tau_0$, let $X_n$ denote the $n$th ball added to the urn, conditional on the urn $\mu_{n-1}$. Recall $\E[X_n \mid \mu_{n-1}]  = 2 n \cdot A_{n-1}$, where $A_n := \frac{S_n}{n(n+1)}$. Thus, define $\tilde{X}_n := \frac{X_n}{n}$, then $\lim_{n \to \infty} \E[X_n \mid \mu_{n-1}] = 2 \lim_{n \to \infty} A_n = 2 A$ exists by Lemma\ref{lem: An}.  The recursive distributional equation satisfied by $\tilde{X}_n$ is
\[
\tilde{X}_n \stackrel{d}{=} \frac{I_n^1}{n}  \cdot \tilde{X}_{I_n^1}^1 + \frac{I_n^2}{n} \cdot \tilde{X}_{I_n^2}^2,
\]
for $I_n^i, i = 1, 2$ iid $\text{Uniform}(\{1, \dots, n -1 \})$. As remarked before Theorem \ref{thm: tildeXnConverge}, the random variables $\frac{I_n^i}{n}$ converge in $\ell_2$ to the $\text{Uniform}([0, 1])$ distribution. Thus, taking $B_{I_n}^i = I_n^i/n$,
Theorem \ref{thm: tildeXnConverge} can then be applied to get that $\tilde{X}_n$ converges in distribution to some $X$ with $\E[X] = 2 \cdot A$ which satisfies \eqref{eqn: gamDist}. Finally, Lemma \ref{lem: gamma} says that the distribution of $X/A$ must be Gamma with shape $2$. Now, recalling that $Z_n$ denotes a draw from the urn $\mu_n^d$, we have the distributional identity
\begin{equation} \label{eqn: ZXrel}
Z_n \stackrel{d}{=} X_{I_n},
\end{equation}
where $I_n$ is a uniformly chosen index from $\{1, \dots, n \}$. Again scaling $\tilde{Z}_n = \frac{Z_n}{n}$, we get
\[
\tilde{Z}_n \stackrel{d}{=} \frac{I_n}{n} \cdot \tilde{X}_{I_n}.
\]
Since $\tilde{X}_{I_n}/A$ converges in $\ell_2$ to $X \sim \text{Gamma}(2, 1)$, this prove $\tilde{Z}_n$ converges in distribution to $U \cdot X$, where $U \sim \text{Uniform}([0, 1])$. By Lemma \ref{lem: expFixedPoint}, it follows that the limit point has exponential distribution.

\end{proof}

\section{Conclusion and future questions}

\begin{center}
\begin{figure}[t]
\begin{center}
  \begin{subfigure}[b]{0.45\textwidth}
    \includegraphics[scale = 0.4]{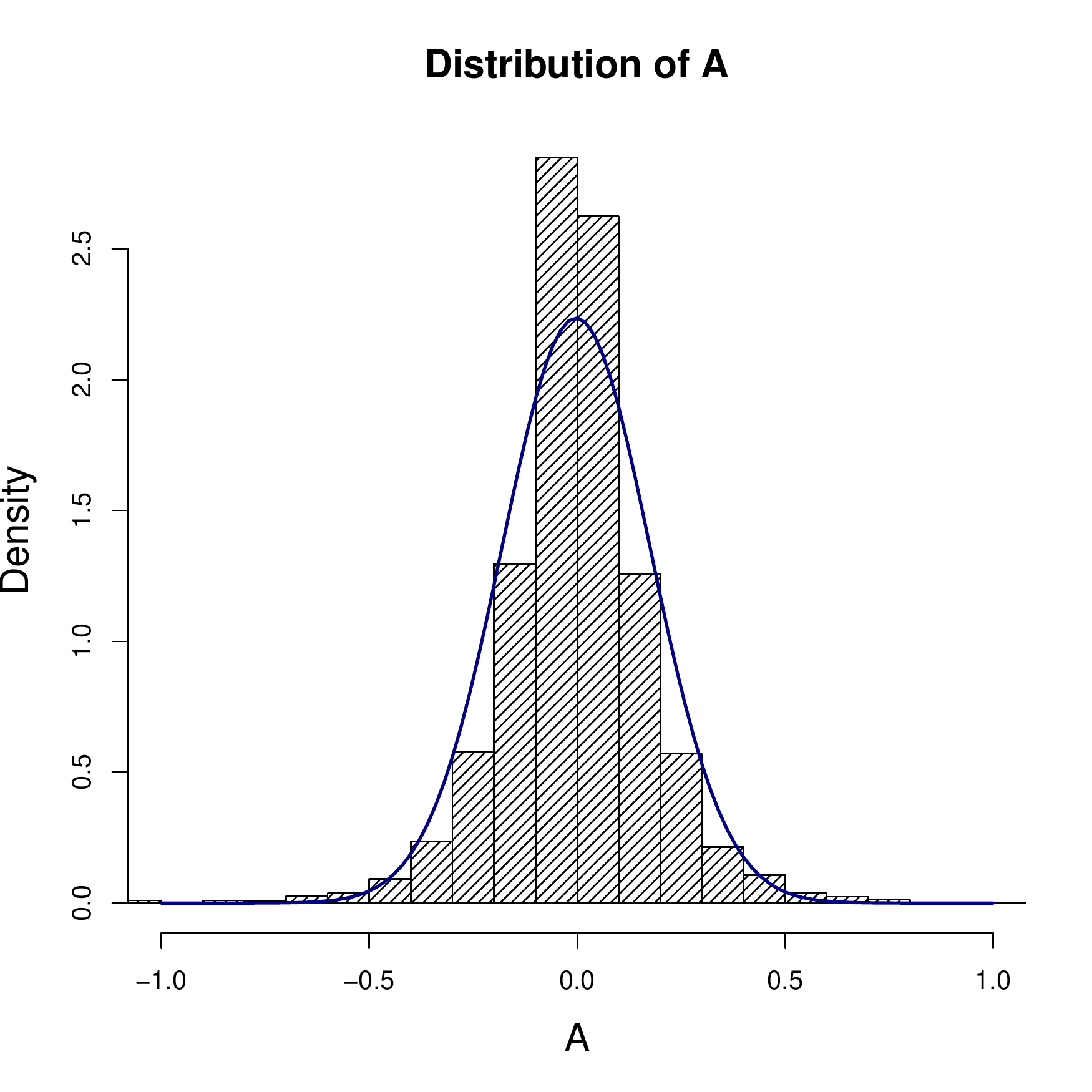}
    \caption{Initial configuration of $\{-1, 1 \}$. \\The curve is a fitted normal density.}
    \label{fig:1}
  \end{subfigure}
  \begin{subfigure}[b]{0.45\textwidth}
    \includegraphics[scale = 0.4]{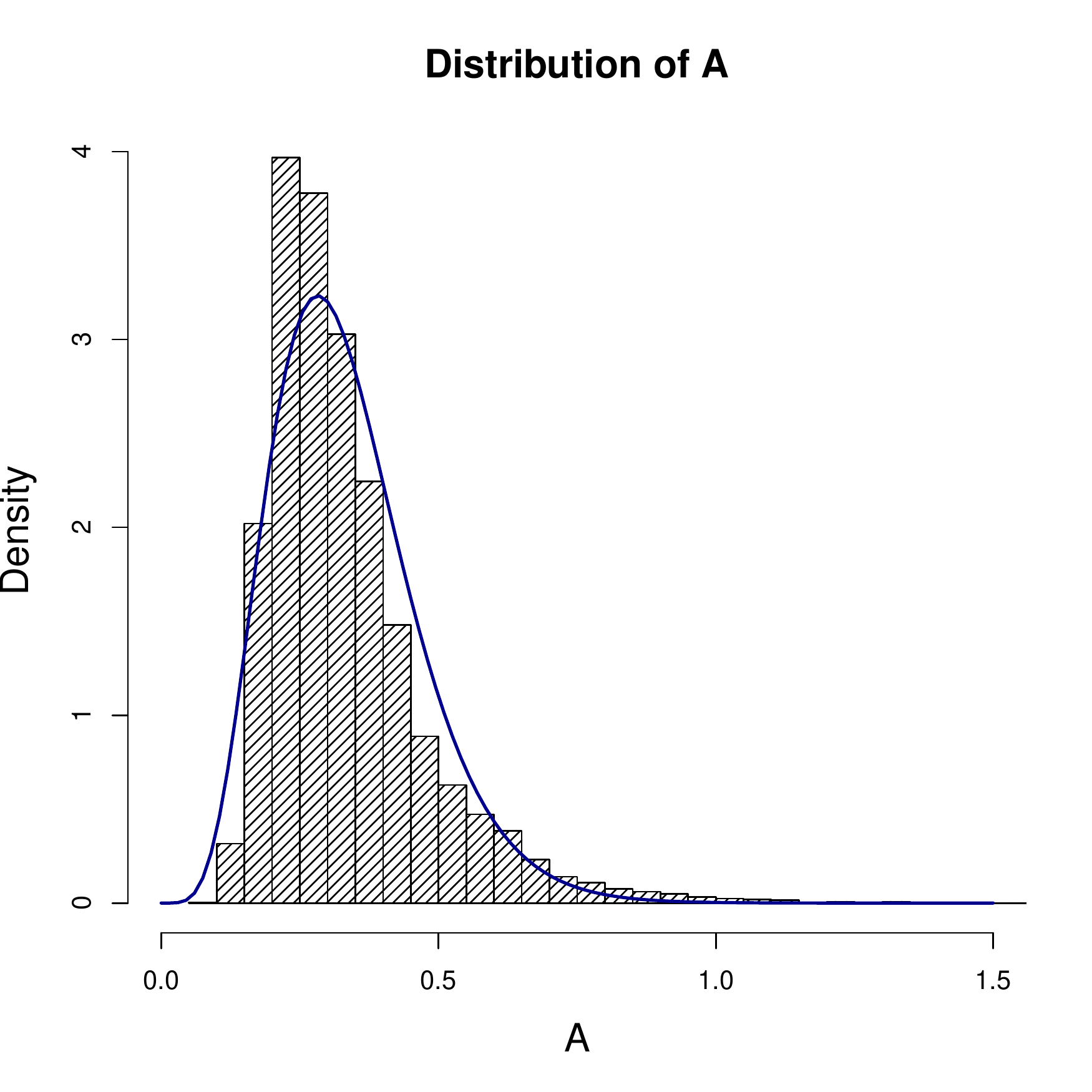}
    \caption{Initial configuration of $\{1, 1 \}$. \\The curve is a fitted Gamma density. }
    \label{fig:2}
  \end{subfigure}
  \end{center}
  
  \caption{Results for $A$ from simulations of the $\Z$-urn model from two different initial configurations. An urn realization was run for $5,000$ rounds and the value of $A_n$ was recorded. A total of $5,000$ realizations of each urn process were run to display the empirical distribution of $A$. }
  \label{fig: A}
\end{figure}

\end{center}

This paper introduced a new urn model, extending the product urn model from \cite{SiegmundYakir} for finite groups to the integers or real numbers. Among the vast collection of urn models, the $\Z$-urn is interesting because it contains two properties which are challenging to study: drawing more than one ball, and an infinite number of types of balls. It is proven that the re-scaled empirical distribution defined by the urn converges to a multiple of an exponential distribution. The multiple, determined by the mean of the limiting urn, is a random variable and a main remaining question is the distribution of this random variable.

Recall the notation $S_n = \sum_{i = 1}^n X_n$ for the sum of the labels in the urn at step $n$, and $A_n = \frac{S_n}{n(n+1)}$. The process $\{A_n \}$ is a martingale and converges almost surely to some limit $A$. The distribution of $A$ can be explored using simulations. From Section \ref{sec: mean}, the mean of $A$ is
\[
\E[A] = \frac{S_{\tau_0}}{\tau_0(\tau_0 + 1)}.
\]
That is, $A$ depends on both the sum of the initial labels in the urn, and the number $\tau_0$ of  balls started. Figure 2(a) shows a histogram for the result for $A$ for a urn started with a ball labeled $-1$ and a ball labeled $1$. The distribution is of course centered around $0$, but it is not normal. Figure 2(b) exhibits the results for $A$ for urns started with two $1$ labels. The mean in this case is $2/(2 \cdot 3) = 1/3$. A $\chi^2$ test for the null hypothesis that a Gamma distribution fits the data resulted in a $p$-value $<0.001$, indicating Gamma is not a good fit. It would be interesting to explore this distribution in future work, though even proving it is continuous seems to be a challenge. 

Another interesting problem would be to study the model for different infinite groups. For example, $\R$ with multiplication operation, or the Heisenberg group. The result of an exponential limit law is very specific to the addition process, so the limits for different groups and group operations could be curious.

\vspace{5mm}

\paragraph{Acknowledgments}  The author thanks Persi Diaconis for many discussions, Brian Skryms for introducing motivation to study urn models, and David Siegmund, Svante Janson, and Robin Pemantle for helpful comments. She also thanks an anonymous referee for careful reading and suggestions for interesting generalizations. The author is supported by a National Defense Science \& Engineering Graduate Fellowship.

\bibliographystyle{abbrv}
\bibliography{citations}

\end{document}